\documentclass[11pt]{article}
\usepackage{amsmath,amsfonts,amssymb,amscd}

\newtheorem{theo}{Theorem}
\newtheorem{lem}{Lemma}[section]
\newtheorem{prop}{Proposition}[section]

\newtheorem{cor}{Corollary}[section]
\newtheorem{rem}{Remark}[section]
\newtheorem{dfn}{Definition}[section]

\makeatletter \@addtoreset{equation}{section} \makeatother

\newcommand{\mR}{\mathbb{R}}

\newcommand{\mZ}{\mathbb{Z}}
\newcommand{\mN}{\mathbb{N}}

\newcommand{\calH}{{\cal H}}

\newcommand{\calM}{{\cal M}}

\newcommand{\thet}{\vartheta}

\newcommand{\Conj}{\operatorname{Conj}}

\newcommand\qed{{\unskip\nobreak\hfil\penalty50
  \hskip2em\hbox{}\nobreak\hfil\mbox{\rule{1ex}{1ex} \qquad}
    \parfillskip=0pt \finalhyphendemerits=0\par\medskip}}

\begin{document}

\title
{Integrable perturbations of polynomial Hamiltonian systems}
\author{ D.~Treschev\\
Steklov Mathematical Institute of Russian Academy of Sciences
\footnote{
This work was supported by the Russian Science Foundation under grant no. 25-11-00114.
}
}

\maketitle

\begin{abstract}
We consider a Hamiltonian system on the symplectic space $(\mR^{2n}, dy\wedge dx)$ with a real-analytic Hamiltonian $H : \mR^{2n}\to \mR$. We assume that the system has a non-degenerate equilibrium position at the origin. Under some nonresonance assumptions we prove the following.

For any positive integer $M$ there exists a real-analytic function $F:\mR^{2n}\to\mR$ such that

(1) $F = O\big( (|x|+|y|)^{M+1} \big)$ at the origin,

(2) the system with Hamiltonian $H+F$ is completely integrable in $\mR^{2n}$.
\end{abstract}

\section{Local result}

Let $x = (x_1,\ldots,x_n)$ and $y = (y_1,\ldots,y_n)$ be coordinates in the symplectic space
$(\mR^{2n},dy\wedge dx)$. Let $H:\mR^{2n}\to\mR$ be a real-analytic function. We assume that the Hamiltonian system
\begin{equation}
\label{eq1.1}
  \dot x = \partial H/\partial y, \quad
  \dot y = - \partial H/\partial x
\end{equation}
has a non-degenerate equilibrium position at the origin and the corresponding eigenvalues are pairwise distinct. This means (see for example, \cite{Arn_MMCM}) that after a linear change of coordinates $H$ takes the form\footnote{Here we use the notation $O_M(x,y)$ as a shorter form of $O\big( (|x|+|y|)^M \big)$.}
\begin{eqnarray}
\nonumber
&&  H = H_2 + H_*, \qquad
    H_* = O_3(x,y), \\
\nonumber
     H_2
 &=& \sum_{j=1}^{2n_1} \Big( - a_j (y_{2j-1} x_{2j-1} + y_{2j} x_{2j})
                             + b_j (y_{2j-1} x_{2j} - y_{2j} x_{2j-1}) \Big) \\
%\label{H2}
\nonumber
 &&\quad  + \sum_{k=2n_1+1}^{n_2} \frac{\omega_k}2 (x_k^2 + y_k^2) \;
          + \sum_{l=n_2+1}^{n} \lambda_l x_l y_l.
\end{eqnarray}
The eigenvalues $\mu_1,\ldots,\mu_n$ are
\begin{equation}
\label{eq1.2}
\begin{array}{ccc}
& \mu_{2j-1} = - a_j - ib_j, \quad
    \mu_{2j} = - a_j + ib_j, \qquad
  j = 1,\ldots,n_1, & \\[1mm]
& \mu_k = -i\omega_k, \quad \mu_l = \lambda_l, \qquad
  k = 2n_1+1,\ldots,n_2, \quad
  l = n_2+1,\ldots,n. &
\end{array}
\end{equation}

The vector of eigenvalues $\mu = (\mu_1,\ldots,\mu_n)$ is said to be nonresonant if
$$
  \langle \mu,k\rangle \ne 0 \quad
  \mbox{ for any $k\in\mZ^n\setminus\{0\}$}.
$$

We start with a very simple result.

\begin{theo}
\label{theo1}
Suppose that $H$ is a polynomial, $\deg H \le M$ and $\mu$ is nonresonant. Then there exists a function $F$ such that
\begin{itemize}
\item  $F$ is real-analytic in a neighborhood $U$ of the point $0\in\mR^{2n}$,
\item  $F = O_{N+1}(x,y)$ at the origin,
\item  system with Hamiltonian $H_2 + H_* + F$ is completely integrable in $U$.
\end{itemize}
\end{theo}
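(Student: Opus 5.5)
The idea is to use Birkhoff normal form theory: for a nonresonant equilibrium, the formal normal form is a function of the quadratic integrals only. I read $N$ in the statement as an arbitrary positive integer; the assumption $\deg H\le M$ only matters because polynomial $H$ lets us work with entire functions on a neighborhood. Write $\rho_1,\dots,\rho_n$ for the elementary quadratic integrals of $H_2$, one per block of $H_2$:
\begin{itemize}
\item $\rho_l = x_ly_l$ for a hyperbolic (real) eigenvalue;
\item $\rho_k=(x_k^2+y_k^2)/2$ for an elliptic eigenvalue;
\item for a complex quadruple, $y_{2j-1}x_{2j-1}+y_{2j}x_{2j}$ and $y_{2j-1}x_{2j}-y_{2j}x_{2j-1}$.
\end{itemize}
These commute pairwise and are functionally independent almost everywhere.

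\textbf{Step 1 (finite Birkhoff normalization).} Since $\mu$ is nonresonant, $\langle \mu,k\rangle\neq0$ for every $k\ne0$. So the homological equation $\{H_2,G_m\}=R_m-\overline{R_m}$ is solvable degree by degree for $m=3,\dots,N$, where $\overline{R_m}$ is the projection of $R_m$ onto functions of $\rho$. Each $G_m$ is a homogeneous polynomial, and we take it real by pairing complex conjugate monomials. Composing the time-one maps of the polynomial Hamiltonians $G_3,\dots,G_N$ gives a real-analytic symplectic diffeomorphism $\Phi$ of a neighborhood $U$ of $0$ with $\Phi(0)=0$. It satisfies
$$
H\circ\Phi = h_N(\rho) + R, \qquad R=O_{N+1}(x,y),
$$
where $h_N$ is a polynomial in $\rho$ and $h_N(\rho)=H_2+O_4$. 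The time-one maps are analytic near the origin because the $G_m$ are polynomials.

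\textbf{Step 2 (define $F$).} Set
$$
F = -R\circ\Phi^{-1} = h_N(\rho)\circ\Phi^{-1} - H .
$$
This $F$ is real-analytic on $\Phi(U)$, which is a neighborhood of $0$. Since $\Phi^{-1}(z)=z+O_2(z)$, we get $F=O_{N+1}$ at the origin. We also have $H+F=h_N(\rho)\circ\Phi^{-1}$.

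\textbf{Step 3 (integrability).} The functions $\rho_i\circ\Phi^{-1}$, $i=1,\dots,n$, are $n$ analytic integrals in involution, because $\Phi$ is symplectic. They are independent on an open dense subset of $\Phi(U)$. Each one commutes with $h_N(\rho)\circ\Phi^{-1}$, since every $\rho_i$ Poisson-commutes with every function of $\rho$. This gives complete integrability in $\Phi(U)$.

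\textbf{Main obstacle.} The delicate point is Step 1's reality and convergence bookkeeping. The generating functions must be real in the original real coordinates. In particular, in the complex-quadruple blocks the resonance-free projection has to be expressed through real invariants. The remaining steps are routine. Note that the polynomial assumption on $H$ is not essential for this local statement; analyticity of $H$ near $0$ suffices, since only finitely many normalization steps are used.
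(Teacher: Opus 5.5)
Your proposal is correct and takes essentially the paper's approach: a finite-order, real-analytic symplectic Birkhoff normalization $\Phi$ brings $H$ to a polynomial in the quadratic invariants (\ref{eq1.3}) plus a high-order remainder, $F$ is the transported normal form minus $H$, and the transported quadratic invariants serve as the commuting integrals. You spell out the standard details the paper leaves implicit (polynomial Lie generating functions, reality of the $G_m$, independence of the $\rho_i$, the composition direction), and taking the order $N$ arbitrary covers the intended $O_{M+1}$.
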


{\it Proof}. Let $\Phi$ be a real-analytic symplectic change of coordinates
$$
  (x,y)\mapsto \Phi(x,y) = (X,Y), \qquad
  \Phi(0) = 0, \quad D\Phi(0) = I
$$
which reduces the Hamiltonian to the normal form $N(X,Y) + O_{M+1}(X,Y)$
up to terms of orders higher than $M$. By the nonresonance assumption the function $N$ is a polynomial in
\begin{equation}
\label{eq1.3}
  Y_{2j-1} X_{2j-1} + Y_{2j} X_{2j}, \quad
  Y_{2j-1} X_{2j} - Y_{2j} X_{2j-1}, \quad
  X_k^2 + Y_k^2, \quad
  X_l Y_l.
\end{equation}

The system with Hamiltonian $N$ is completely integrable: the quadratic polynomials (\ref{eq1.3}) are first integrals in involution. The system with Hamiltonian $N\circ\Phi^{-1}$ is also completely integrable. Moreover,
$$
  F := N\circ\Phi^{-1} - (H_2 + H_*) = O_{M+1}(x,y).
$$
\qed

\begin{rem}
\label{rem1}
(a) We may assume in Theorem \ref{theo1} that $H_* = O_3(x,y)$ is any function real-analytic at the origin.
\smallskip

(b) Nonresonance condition for $\mu$ may be replaced by the condition of complete integrability of the partial normal form $N$ (in the resonant case $N$ is not a function of the polynomials (\ref{eq1.3}) only.

(c) The case $n_1=0$ and $n_2=n$ is called elliptic. It is more interesting from dynamical viewpoint because if $n_1\ne 0$ or $n_2\ne n$, almost all trajectories of the system (\ref{eq1.1}) starting in a small neighborhood of the origin, leave this neighborhood for both large positive and large negative values of time.
\end{rem}

\section{Extension of the neighborhood $U$}

Radius of convergence $r$ of the transformation $\Phi$ (as well as of the series $N\circ\Phi$) is positive. However we do not expect any constructive lower estimate for $r$. Here is a version of Theorem \ref{theo1} with a ``big'' neighborhood $U$.

\begin{theo}
\label{theo2}
Theorem 1 holds with $U = \mR^{2n}$.
\end{theo}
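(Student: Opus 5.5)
The plan is to redo the proof of Theorem \ref{theo1}, but to replace the local normalizing transformation $\Phi$ by a \emph{global} real-analytic symplectic diffeomorphism $\Phi:\mR^{2n}\to\mR^{2n}$. This $\Phi$ will agree with a normalizing transformation only up to terms of order $M$. Then $N\circ\Phi$ is defined on all of $\mR^{2n}$. It is completely integrable with first integrals $f\circ\Phi$, where $f$ runs over the quadratic polynomials (\ref{eq1.3}). These integrals are real-analytic on $\mR^{2n}$, they are in involution because $\Phi$ is symplectic, and they are functionally independent on an open dense set because the polynomials (\ref{eq1.3}) are and $\Phi$ is a diffeomorphism. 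Setting $F := N\circ\Phi - H$ gives $F=O_{M+1}(x,y)$ as soon as the $M$-jets match.

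To build $\Phi$ I will use the Lie-series (Deprit) version of Birkhoff normalization, with cut-off generators. Write $|z|^2=|x|^2+|y|^2$. Proceed inductively in $k=3,\dots,M$. Suppose the current Hamiltonian $H^{(k-1)}$ is already in normal form up to degree $k-1$. By nonresonance of $\mu$, choose a homogeneous polynomial $P_k$ of degree $k$ that solves the homological equation $\{H_2,P_k\}=H^{(k-1)}_k-\overline{H^{(k-1)}_k}$. Here $H^{(k-1)}_k$ is the degree-$k$ part and the bar denotes projection onto functions of the quantities (\ref{eq1.3}).

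Instead of $P_k$ itself I take the generator $G_k = P_k\, e^{-|z|^2}$ and let $H^{(k)} = H^{(k-1)}\circ g^1_{G_k}$, where $g^t_{G}$ is the Hamiltonian flow of $G$. Since $e^{-|z|^2}=1+O_2$, we have $G_k = P_k + O_{k+2}$. Hence the time-one map $g^1_{G_k}$ has the same effect on terms of degree $\le k$ as the flow of $P_k$. It normalizes the degree-$k$ part and leaves lower degrees untouched. The extra terms of degree $\ge k+1$ it creates are simply absorbed into later steps. After step $M$ we set $\Phi^{-1} = g^1_{G_3}\circ\dots\circ g^1_{G_M}$. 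The composition order is chosen so that $H\circ\Phi^{-1} = N + O_{M+1}$, with $N$ a polynomial in (\ref{eq1.3}). Consequently $N\circ\Phi = H + O_{M+1}$.

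The step requiring care, and in my view the main obstacle, is that each $g^1_{G_k}$ must be a global real-analytic symplectic diffeomorphism of $\mR^{2n}$. This is exactly why the Gaussian cut-off is used instead of the polynomial $P_k$, whose flow may blow up in finite time. The function $G_k$ is entire. Its gradient and Hessian are bounded on $\mR^{2n}$, since any polynomial times $e^{-|z|^2}$ is bounded. So the Hamiltonian vector field is globally Lipschitz and its flow is complete. By analytic dependence of solutions on initial data, $g^1_{G_k}$ is real-analytic, and it is symplectic with analytic inverse $g^{-1}_{G_k}$.

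It remains to check two things: that jets of compositions behave as claimed, and that each later step $j>k$ does not disturb degrees $<j$. Both are routine because $G_j=O_j$, so $g^1_{G_j}=\mathrm{id}+O_{j-1}$ and $H\circ g^1_{G_j} = H + \{H,G_j\}+\dots$, where $\{H,G_j\} = \{H_2,P_j\} + O_{j+1}$. The polynomiality of $H$ is not really used, so this argument would also cover Remark \ref{rem1}(a) in the global setting provided $H$ is analytic on all of $\mR^{2n}$.
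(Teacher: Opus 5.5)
Your argument is correct, but it reaches the global normalizing map by a different route than the paper.

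\textbf{The paper's route.} It first builds a single non-autonomous polynomial generator $K(x,y,\delta)$ by continuous averaging (Lemma \ref{lem2.1}, Proposition \ref{prop3.3}). The flow of $K$ over $\delta\in[0,+\infty)$ normalizes $H$ up to order $M$. The paper then applies the Gaussian cut-off once: $L=Pe^{-(x^2+y^2)}$, where $P$ is the degree-$\le M$ part of $Ke^{x^2+y^2}$. The global diffeomorphism $\Psi$ is the $\delta\to+\infty$ limit of the flow of $L$.

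\textbf{Your route.} You run ordinary Lie-series Birkhoff normalization in finitely many steps. You apply the same cut-off to each autonomous generator, $G_k=P_ke^{-|z|^2}$.

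\textbf{What is shared.} The key idea is the same in both: multiplying a polynomial generator by $e^{-|z|^2}$ keeps the relevant jet, bounds all derivatives, and so makes the flow complete.

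\textbf{What each buys.} Your version is more elementary. It needs neither the continuous-averaging machinery (the exponential decay of coefficients in $\delta$) nor the infinite-time limit. The paper only asserts briefly that this limit exists and is a real-analytic diffeomorphism. You need only completeness of time-one flows of vector fields with bounded derivatives and analytic dependence on initial data. You also handle reality directly, by solving the homological equation in real coordinates, instead of through $\thet$-reality. The paper's route yields a single flow with an exponentially decaying generator. That fits the author's continuous-averaging framework (for instance, $\delta$-uniform analyticity via majorants), but Theorem \ref{theo2} itself does not need it.

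\textbf{A sign slip.} With your convention $H\circ g^1_G=H+\{H,G\}+\dots$, the degree-$k$ part of $H^{(k)}$ is $H^{(k-1)}_k+\{H_2,P_k\}$. So the homological equation should read $\{H_2,P_k\}=\overline{H^{(k-1)}_k}-H^{(k-1)}_k$; simply replace $P_k$ by $-P_k$. Nothing else in the argument is affected.
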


{\it Proof}. We follow essentially the same idea. We construct the above coordinate change $\Phi$ as a shift along solutions of another Hamiltonian system
\begin{equation}
\label{eq2.1}
  x' = \partial K / \partial y, \quad
  y' = - \partial K / \partial x, \qquad
  (\cdot)' = d / d\delta, \quad
  0\le \delta\le +\infty
\end{equation}
with the Hamiltonian $K = K(x,y,\delta)$. The shift has the form
\begin{equation}
\label{eq2.2}
  (x,y) = (x(0),y(0)) \mapsto (X,Y) = (x(+\infty),y(+\infty)).
\end{equation}

\begin{lem}
\label{lem2.1}
There exists a Hamiltonian $K$ such that (\ref{eq2.2}) transforms $H$ to the partial normal form $N+O_{M+1}(x,y)$, where $N$ is a polynomial function of the quadratic polynomials (\ref{eq1.3}). Moreover,
\begin{itemize}
\item $K$ is a polynomial in $x$ and $y$, $\deg K = M$,
\item coefficients of this polynomial tend to zero exponentially in $\delta$.
\end{itemize}
\end{lem}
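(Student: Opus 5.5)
The plan is to build $K$ degree by degree, mimicking the classical Lie-series (Deprit) normalization. The difference is that the ``time'' $\delta$ runs over $[0,+\infty)$, and each homogeneous correction is spread over $\delta$ with an exponentially decaying profile.

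\emph{Setting up the flow.} Denote by $g^\delta$ the flow of (\ref{eq2.1}) from time $0$ to time $\delta$, and by $H^\delta$ the Hamiltonian written in the variables at time $\delta$. Since the shift is symplectic,
$$
\partial H^\delta/\partial\delta = \{H^\delta, K\}
$$
up to a sign convention, which we fix once and for all. We look for $K = \sum_{m=3}^{M} K_m(x,y,\delta)$ with $K_m$ homogeneous of degree $m$ in $(x,y)$, and we impose $|K_m(\cdot,\delta)|\le C e^{-\delta}$, with coefficient norms bounded in this way.

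\emph{Solving order by order.} Expand $H^\delta = H_2 + \sum_{m\ge3} H^\delta_m$. The component of degree $m$ satisfies
$$
\partial_\delta H_m^\delta = \{H_2,K_m\} + R_m^\delta ,
$$
where $R_m^\delta$ is built from brackets $\{H^\delta_{j},K_{i}\}$ with $i<m$ and $j+i-2=m$. It is therefore already determined once $K_3,\dots,K_{m-1}$ are chosen, and it decays exponentially in $\delta$ when those $K_i$ do. Integrating over $[0,\infty)$,
$$
H^\infty_m = H_m^0 + \mathrm{ad}_{H_2}\Big(\int_0^\infty K_m\,d\delta\Big) + \int_0^\infty R_m^\delta\,d\delta .
$$
We choose $K_m(x,y,\delta) = e^{-\delta} G_m(x,y)$, where $G_m$ solves the homological equation
$$
\mathrm{ad}_{H_2} G_m = -\Pi\Big(H_m^0 + \int_0^\infty R_m^\delta\,d\delta\Big),
$$
and $\Pi$ is the projection onto the complement of $\ker \mathrm{ad}_{H_2}$ on degree-$m$ polynomials. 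This is solvable because $\mathrm{ad}_{H_2}$ is diagonal in the monomials of the complexified eigen-coordinates, with eigenvalues $\langle\mu,k\rangle$. Nonresonance makes the kernel consist exactly of polynomials in (\ref{eq1.3}), and it makes $\mathrm{ad}_{H_2}$ invertible on the complement. Reality of $G_m$ follows from equivariance under complex conjugation. With this choice $H^\infty_m$ lies in the kernel, i.e.\ is a polynomial in (\ref{eq1.3}). We stop at $m=M$, so that $K$ is a polynomial of degree $M$.

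\emph{The main obstacle: well-posedness on the infinite interval.} Order-by-order bookkeeping is only formal unless the flow $g^\delta$ exists for all $\delta\in[0,\infty)$ and $g^\infty=\lim g^\delta$ is a real-analytic symplectic map. It must also be tangent to the identity up to the required order, so that the degree-$\le M$ part of the transformed $H$ is indeed given by the recursion. Since $K$ is polynomial of degree $M\ge3$, solutions could a priori blow up in finite $\delta$ far from the origin, and the transformed Hamiltonian must make sense globally, as the later use in Theorem~\ref{theo2} requires.

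To handle this I would first try to exploit the exponential decay: $|\nabla K|\le Ce^{-\delta}(1+|z|)^{M-1}$. For $\delta\ge\delta_0(|z|)$ this makes the flow a contraction-type perturbation of the identity. If global existence fails near finite $\delta$, I would multiply $K$ by a cutoff-free reparametrization, replacing $e^{-\delta}$ by a profile $\rho(\delta)$ with $\int\rho=1$ and $\rho$ decaying. I would then note that the lemma only claims the normal form modulo $O_{M+1}$, which is a statement about jets at $0$. Near the origin the flow exists for all $\delta$ with $|z(\delta)|\le 2|z(0)|$ by Gronwall, since $\int_0^\infty e^{-\delta}d\delta<\infty$. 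The jet computation is then justified by analytic dependence on initial data, giving the lemma on a neighborhood of $0$ for each degree.
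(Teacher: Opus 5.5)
Your proof is correct, but it takes a different route from the paper. The paper uses continuous averaging in feedback form. There $K=\xi\calH_*$ is built from the current Hamiltonian, with $\hat\xi$ keeping only the non-resonant coefficients of degree $\le M$, each multiplied by $\sigma_{\alpha,\beta}$. Then (\ref{eq3.5}) becomes the triangular system (\ref{eq3.7}). In it each non-resonant coefficient is damped at its own rate $|\langle\mu,\beta-\alpha\rangle|$, and the resonant ones converge because their only driving terms are brackets containing decaying coefficients. No homological equation is solved, no remainder has to be integrated, and reality comes from Proposition~\ref{prop3.2}.

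You instead choose $K$ open-loop: $K_m=e^{-\delta}G_m$, with $G_m$ from the classical homological equation. Then $K=e^{-\delta}G$ with $G=\sum_m G_m$ independent of $\delta$. The substitution $s=1-e^{-\delta}$ turns (\ref{eq2.1}) into the autonomous system with Hamiltonian $G$ on $s\in[0,1]$. So your shift (\ref{eq2.2}) is exactly the time-one map of $G$: your construction is the classical Lie-series Birkhoff normalization, reparametrized, and your recursion is its order-by-order bookkeeping.

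This observation simplifies your last paragraph considerably. The shift is defined wherever the time-one flow of $G$ is, in particular near $0$, since $X_G=O_2$. Replacing $e^{-\delta}$ by another profile $\rho$ with $\int_0^\infty\rho=1$ cannot help, since it gives the same map. Global existence is also not part of the lemma. The paper handles it in Theorem~\ref{theo2} via $L=Pe^{-(x^2+y^2)}$; for your $K$ this is again $e^{-\delta}$ times an autonomous Hamiltonian with globally Lipschitz vector field, so $\Psi$ is a time-one map and trivially a global diffeomorphism.

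Your route buys elementarity. The paper's buys a canonical $K$ with explicit decay rates, and a structure suited to the majorant estimates (analyticity of $\hat\calH$ in a $\delta$-independent neighborhood) that the continuous-averaging framework is designed for.
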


Proof of the lemma requires a special technics, which is called the {\it continuous averaging}. We prove Lemma \ref{lem2.1} in Section \ref{sec:ca}.
\medskip

\begin{lem}
\label{lem2}
Let $K$ be a function from Lemma \ref{lem2.1}. There exists a function $P(x,y,\delta)$ such that
\begin{itemize}
\item $P$ is a polynomial in $x$ and $y$, $\deg P \le M$,
\item $P(x,y,\delta) e^{-(x^2 + y^2)} = K(x,y,\delta) + O_{M+1}(x,y)$,
\item coefficients of the polynomial $P$ tend to zero exponentially as $\delta\to +\infty$.
\end{itemize}
\end{lem}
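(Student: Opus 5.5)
Take $P$ to be the degree-$M$ Taylor polynomial of $K(x,y,\delta)\,e^{x^2+y^2}$ at the origin, where $x^2+y^2$ means $\sum_j(x_j^2+y_j^2)$. Put $E(x,y)=e^{x^2+y^2}=\sum_{m\ge0}(x^2+y^2)^m/m!$, an entire function with $E(0)=1$. Write $T_M$ for truncation of a power series at the origin to total degree $\le M$, and define
$$
  P(x,y,\delta) := T_M\big(K(x,y,\delta)\,E(x,y)\big).
$$

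The first property is immediate: $P$ is a polynomial in $x,y$ of degree $\le M$.

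For the second property, note that $KE - P = O_{M+1}(x,y)$ by construction. Multiplying by the entire function $e^{-(x^2+y^2)}$, which is bounded near $0$, gives
$$
  P e^{-(x^2+y^2)} = K - (KE-P)e^{-(x^2+y^2)} = K + O_{M+1}(x,y).
$$
Equivalently, $T_M$ is a ring homomorphism from germs modulo $O_{M+1}$ to polynomials of degree $\le M$, and $E$ is invertible in this ring, so $P$ is the unique polynomial of degree $\le M$ with this property.

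For the third property, write $K=\sum_{|\alpha|+|\beta|\le M} k_{\alpha\beta}(\delta)\,x^\alpha y^\beta$. Since only the terms of $E$ with $2m\le M$ can contribute to $T_M(KE)$,
$$
  P = \sum_{|\alpha|+|\beta|\le M}\;\sum_{m\le M/2} k_{\alpha\beta}(\delta)\, T_M\Big(x^\alpha y^\beta\,\frac{(x^2+y^2)^m}{m!}\Big).
$$
So every coefficient of $P$ is a finite linear combination of the $k_{\alpha\beta}(\delta)$, with constant coefficients that depend only on $n$ and $M$. By Lemma \ref{lem2.1}, $|k_{\alpha\beta}(\delta)|\le Ce^{-c\delta}$ for some $C,c>0$. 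Hence every coefficient of $P$ is bounded by $C'e^{-c\delta}$ with the same rate $c$.

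There is no genuine obstacle here; the only point needing care is that the linear map $K\mapsto P$ has $\delta$-independent coefficients, and this holds because $E$ does not depend on $\delta$. If smoothness or analyticity of $P$ in $\delta$ is needed later, it is inherited from $K$ by the same linearity.
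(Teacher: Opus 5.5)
Your proposal is correct and uses exactly the paper's construction: the paper defines $P$ as the degree-$\le M$ part of the Taylor expansion of $K e^{x^2+y^2}$ and leaves the verification implicit. Your two checks supply those omitted details: multiplying $KE-P=O_{M+1}(x,y)$ by $e^{-(x^2+y^2)}$ gives the second property, and the linearity of $K\mapsto P$ with $\delta$-independent coefficients gives the exponential decay.
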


{\it Proof}. The polynomial $P$ can be computed explicitly:
$$
  P = \mbox{polynomial part (degree $\le M$) in the Taylor expansion in $x,y$ of $K e^{x^2 + y^2}$}.
$$
\qed

We take $L(x,y,\delta) = P(x,y,\delta) e^{-(x^2 + y^2)}$. Then the function $L$
\begin{itemize}
\item is real-analytic on $\mR^{2n}$,
\item satisfies the equation $L(x,y,\delta) - K(x,y,\delta) = O_{M+1}(x,y)$,
\item tends to zero exponentially when $|x| + |y| \to \infty$:
$$
  \lim_{|x|+|y|\to\infty} L(x,y,\delta) e^{(x^2+y^2)/2} = 0,
$$
\item tends to zero uniformly exponentially on $\mR^{2n}$ when $\delta\to +\infty$.
$$
  \lim_{\delta\to +\infty} L(x,y,\delta) e^{\alpha\delta} = 0 \quad
  \mbox{for some } \alpha > 0.
$$
\end{itemize}

Solutions of the system with Hamiltonian $L$ are globally defined on the infinite interval
$\delta\in [0,+\infty)$. Each solution $(x(\delta),y(\delta))$ exponentially tends to a point $(X,Y)$ (depending on the solution):
$$
    \lim_{\delta\to +\infty} (x(\delta),y(\delta))
  = (X,Y)\in\mR^{2n}.
$$
Hence, the map
$$
  (x(0),y(0)) \mapsto \Psi(x(0),y(0)) = (X,Y)
$$
is a global real-analytic diffeomorphism of $(\mR^{2n},0)$ to itself. The Hamiltonian $N\circ\Psi^{-1}$
\begin{itemize}
\item is real-analytic on $\mR^{2n}$,
\item generates a completely integrable Hamiltonian system,
\item $N\circ\Psi^{-1} - (H_2 + H_*) = O_{N+1}(x,y)$\quad at $0\in\mR^{2n}$.
\end{itemize}
Theorem \ref{theo2} is proved. \qed

\begin{rem}
\label{rem2}
Having in mind the Grauert theorem on an immersion of a real-analytic manifold in $\mR^D$, $D\in\mN$, \cite{Gra}, it is possible to replace in Theorem \ref{theo2} the phase space $\mR^{2n}$ and the Hamiltonian $H$ by any compact real-analytic symplectic manifold $\calM$ and a real-analytic function on $\calM$ with a nondegenerate
equilibrium position.
\end{rem}

\section{Proof of Lemma \ref{lem2.1}}
\label{sec:ca}

The proof consists of several steps.

\subsection{``Diagonalization'' of $H_2$}

It is convenient to use complex coordinates
$$
   (z,w) = (z_1,\ldots,z_n,w_1,\ldots,w_n), \quad (x,y) =  \thet(z,w), \qquad
   dy\wedge dx = dw\wedge dz,
$$
where for $j=1,\ldots,n_1$
\begin{eqnarray*}
&\displaystyle
    z_{2j-1}
  = \frac{x_{2j-1} + x_{2j}}2 - i\frac{x_{2j-1} - x_{2j}}2, \quad
    z_{2j}
  = \frac{x_{2j-1} + x_{2j}}2 + i\frac{x_{2j-1} - x_{2j}}2, &\\
&\displaystyle
    w_{2j-1}
  = \frac{y_{2j-1} + y_{2j}}2 + i\frac{y_{2j-1} - y_{2j}}2, \quad
    w_{2j}
  = \frac{y_{2j-1} + y_{2j}}2 - i\frac{y_{2j-1} - y_{2j}}2, &
\end{eqnarray*}
for $k=2n_1+1,\ldots,n_2$
$$
  z_k = \frac1{\sqrt2}(iy_k + x_k), \quad
  w_k = \frac1{\sqrt2}(y_k + ix_k),
$$
while for $l=n_2+1,\ldots,n$ we have $z_l = x_l, w_l = y_l$.

The Hamiltonian $H$ takes the form
$$
  \hat H = H\circ\thet = \hat H_2 + \hat H_*, \qquad
  \hat H_2 = H_2\circ\thet = \sum_{j=1}^n \mu_j z_j w_j, \quad
  \hat H_* = H_*\circ\thet,
$$
where the coefficients $\mu_j$ are computed from (\ref{eq1.2}).

\subsection{Reality condition}

Let $\Conj$ be the operator such that for any
$G = \sum_{|\alpha|+|\beta|\ge 2} G_{\alpha,\beta} x^\alpha y^\beta$
$$
    \Conj(G)(x,y) = \overline{G(\overline x,\overline y)}, \quad
    \mbox{or equivalently}, \quad
    \Conj(G)(x,y)
  = \sum_{|\alpha|+|\beta|\ge 2} \overline{G_{\alpha,\beta}} x^\alpha y^\beta.
$$

The original Hamiltonian $H$ is real: $H = \Conj(H)$. This implies that $\hat H$ should also satisfy a certain reality condition. Consider the operators $\Theta$ and $\Conj_\thet$, defined by the equations
$$
  \Theta (H) = H\circ\thet, \quad
  \Conj_\thet = \Theta\Conj\Theta^{-1}.
$$
Then the reality condition for $\hat H$ is given by the following definition:

\begin{dfn}
The function $\hat H = \hat H(z,w)$ is said to be $\thet$-real iff $\Conj_\thet \hat H = \hat H$.
\end{dfn}

Obviously $H$ is real if and only if $\hat H = \Theta H$ is $\thet$-real.

The following lemma gives another version of $\thet$-reality.

\begin{lem}
\label{lem3.1}
The series
\begin{equation}
\label{eq3.1}
  \hat G = \sum_{|\alpha|+|\beta|\ge 2} \hat G_{\alpha,\beta} z^\alpha w^\beta
\end{equation}
is $\thet$-real if $\thet$-real iff for any multiindices $\alpha,\beta$
\begin{equation}
\label{eq3.2}
    \hat G_{\alpha,\beta}
  = i^{-\alpha_{2n_1+1}-\ldots-\alpha_{n_2} - \beta_{2n_1+1}-\ldots-\beta_{n_2}}
    \overline{\hat G_{\alpha',\beta'}},
\end{equation}
where
\begin{eqnarray*}
     \alpha'_{2j-1} = \alpha_{2j}, \quad
     \alpha'_{2j} = \alpha_{2j-1}, \quad
     \beta'_{2j-1} = \beta_{2j}, \quad
     \beta'_{2j} = \beta_{2j-1} \quad
  &\mbox{for any}& j = 1,\ldots,n_1, \\
     \alpha'_k = \beta_k, \quad
     \beta'_k = \alpha_k \quad
  &\mbox{for any}& k = 2n_1+1,\ldots,n_2, \\
     \alpha'_l = \alpha_l, \quad
     \beta'_l = \beta'_l \quad
  &\mbox{for any}& l = n_2+1,\ldots,n.
\end{eqnarray*}
\end{lem}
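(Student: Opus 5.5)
The plan is to compute the operator $\Conj_\thet$ explicitly as an antilinear substitution. From the definitions, $\Theta^{-1}\hat G = \hat G\circ\thet^{-1}$. Applying $\Conj$ and then $\Theta$ gives
$$
  \Conj_\thet \hat G(z,w) = \overline{\hat G\big(\thet^{-1}(\overline{\thet(z,w)})\big)} .
$$
Here $\thet^{-1}(\overline{\thet(z,w)})$ is antilinear in $(z,w)$. So I would write $\overline{\thet^{-1}(\overline{\thet(z,w)})} = \sigma(z,w)$ with $\sigma$ a $\mathbb{C}$-linear map. Expanding $\hat G$ as the series (\ref{eq3.1}) then gives
$$
  \Conj_\thet \hat G(z,w) = \sum \overline{\hat G_{\alpha,\beta}}\, \sigma(z,w)^{(\alpha,\beta)} .
$$
Thus everything reduces to computing $\sigma$, and $\sigma$ acts blockwise because $\thet$ does.

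I would compute $\sigma$ block by block.

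\emph{Hyperbolic indices $l$.} Here $\thet$ is the identity on $(z_l,w_l)$, so $\sigma$ is the identity on these variables.

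\emph{Elliptic indices $k$.} Inverting $z_k=(iy_k+x_k)/\sqrt2$ and $w_k=(y_k+ix_k)/\sqrt2$ gives $x_k=(z_k-iw_k)/\sqrt2$ and $y_k=(w_k-iz_k)/\sqrt2$. Substituting conjugates and conjugating back gives $\sigma: z_k\mapsto -iw_k$, $w_k\mapsto -iz_k$.

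\emph{Focus pairs $2j-1,2j$.} The same computation should give the swap $z_{2j-1}\leftrightarrow z_{2j}$, $w_{2j-1}\leftrightarrow w_{2j}$, with no extra factors. This is because the formulas for $z_{2j}$ and $w_{2j}$ are exactly the complex conjugates of those for $z_{2j-1}$ and $w_{2j-1}$, with real $x,y$.

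With $\sigma$ in hand, the monomial $z^\alpha w^\beta$ is sent to
$$
  (-i)^{\sum_k(\alpha_k+\beta_k)}\, z^{\alpha''} w^{\beta''},
$$
where $(\alpha'',\beta'')$ is obtained from $(\alpha,\beta)$ by the permutation described in the lemma: swap within focus pairs, exchange $\alpha_k\leftrightarrow\beta_k$ for elliptic indices, and leave hyperbolic indices unchanged. This permutation is an involution, and the elliptic exponent sum $\sum_k(\alpha_k+\beta_k)$ is invariant under it. Hence I would reindex the sum and compare coefficients of $z^\alpha w^\beta$ in $\Conj_\thet\hat G=\hat G$. This gives
$$
  \hat G_{\alpha,\beta}=i^{-\sum_k(\alpha_k+\beta_k)}\,\overline{\hat G_{\alpha',\beta'}},
$$
which is (\ref{eq3.2}). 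The converse direction follows by reading the same identity backwards, since the correspondence is coefficientwise.

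The only delicate point is bookkeeping. One must track the order of conjugation and inversion so that the phase comes out as $i^{-(\cdot)}$ rather than $i^{+(\cdot)}$, and check that the focus-block map is a pure swap. I would verify both on low-degree monomials, for instance $\hat H_2=\sum\mu_j z_jw_j$. Here $\overline{\mu_{2j-1}}=\mu_{2j}$, and for elliptic $k$ the factor $(-i)^2$ combined with $\overline{-i\omega_k}=i\omega_k$ returns $-i\omega_k$. This check confirms that $\hat H_2$ is $\thet$-real.
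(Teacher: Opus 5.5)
Your proposal is correct and is essentially the paper's argument. The paper only says it suffices to check (\ref{eq3.2}) on two-term series $c z^\alpha w^\beta + c' z^{\alpha'} w^{\beta'}$ ``by direct computation.'' Your formula $\Conj_\thet \hat G = \sum \overline{\hat G_{\alpha,\beta}}\,\sigma(z,w)^{(\alpha,\beta)}$ carries out exactly that computation. Here $\sigma$ swaps the variables within focus pairs, sends $z_k\mapsto -iw_k$, $w_k\mapsto -iz_k$ on elliptic indices, and is the identity on hyperbolic ones. This also shows why pairing $(\alpha,\beta)$ with $(\alpha',\beta')$ is the right reduction. Your block computations and the phase bookkeeping $(-i)^s=i^{-s}$ check out.
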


{\it Proof}. It is sufficient to check equation (\ref{eq3.2}) for
$\hat G = c z^\alpha w^\beta + c' z^{\alpha'} w^{\beta'}$. This can be done by direct computation. \qed

\begin{prop}
If $\hat G_1$ and $\hat G_2$ are $\thet$-real and $c\in\mR$ then $c\hat G_1$, $\hat G_1+\hat G_2$, $\hat G_1\hat G_2$, and $\{\hat G_1,\hat G_2\}$ are also real.
\end{prop}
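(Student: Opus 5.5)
The plan is to transport everything back to the real coordinates $(x,y)$ through the operator $\Theta$, and check each operation there, where reality is elementary.

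By definition, $\hat G$ is $\thet$-real iff $G:=\Theta^{-1}\hat G = \hat G\circ\thet^{-1}$ satisfies $\Conj G = G$. So write $\hat G_s=\Theta G_s$ with $\Conj G_s = G_s$ for $s=1,2$, and reduce each claim to the corresponding statement for $G_1,G_2$.

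\emph{Linear and multiplicative closure.} $\Theta$ is composition with the fixed linear map $\thet$. Hence it is linear over $\mathbb{C}$ and multiplicative:
$$\Theta(cG_1+G_2)=c\,\Theta G_1+\Theta G_2, \qquad \Theta(G_1G_2)=\Theta G_1\,\Theta G_2.$$
$\Conj$ is additive, multiplicative, and conjugate-linear, so $\Conj(cG)=\bar c\,\Conj G = c\,\Conj G$ for $c\in\mR$. Therefore $cG_1$, $G_1+G_2$ and $G_1G_2$ are real. Applying $\Theta$ shows that $c\hat G_1$, $\hat G_1+\hat G_2$ and $\hat G_1\hat G_2$ are $\thet$-real.

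\emph{Poisson bracket.} This is the only step needing care. I will use two facts.

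\begin{itemize}
\item $\thet$ is symplectic, since $dy\wedge dx=dw\wedge dz$. Hence $\Theta\{G_1,G_2\}=\{\Theta G_1,\Theta G_2\}$, with the bracket in $(z,w)$ on the right and the one in $(x,y)$ on the left. This holds because Poisson brackets are invariant under (complex-)linear symplectic changes of variables. I would verify it by writing $\{F,G\}=\langle \Omega\nabla F,\nabla G\rangle$ and using $D\thet^{-1}\,\Omega\,(D\thet^{-1})^{T}=\Omega$, which is equivalent to the identity of 2-forms.
\item $\Conj$ commutes with the real bracket: $\Conj\{G_1,G_2\}=\{\Conj G_1,\Conj G_2\}$. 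This follows from the coefficient formula. Differentiation in $x,y$ commutes with conjugating coefficients, and the bracket has real (integer) structure constants.
\end{itemize}

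Hence $\{G_1,G_2\}$ is real, and $\{\hat G_1,\hat G_2\}=\Theta\{G_1,G_2\}$ is $\thet$-real.

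As a cross-check, one could instead argue directly from Lemma~\ref{lem3.1}. The coefficient involution $(\alpha,\beta)\mapsto(\alpha',\beta')$, combined with conjugation and the factor $i^{-(\ldots)}$, is multiplicative on monomials. One then needs to check that it respects $\partial_z\wedge\partial_w$. For the elliptic indices $k$, the swap $z_k\leftrightarrow w_k$ reverses the sign of the bracket. I expect this sign to be compensated by the $i$-powers, since one factor of $i^{-1}$ comes from each of the two differentiated variables, giving $i^{-2}=-1$. This bookkeeping is the main obstacle in the direct route, which is why I prefer the transport argument above.
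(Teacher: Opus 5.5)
Your proof is correct, and it is essentially the argument the paper has in mind when it says ``We skip an obvious proof'': since $\Conj_\thet=\Theta\Conj\Theta^{-1}$, it suffices that $\Conj$ and $\Theta$ each respect real scalar multiples, sums, products and Poisson brackets, the bracket case for $\Theta$ coming from $\thet$ being linear and symplectic ($dy\wedge dx=dw\wedge dz$). The sign bookkeeping you anticipate in the direct coefficient route via Lemma~\ref{lem3.1} does work out (the elliptic swap is compensated by the factor $i^{-2}$), but it is not needed.
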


We skip an obvious proof. \qed

\subsection{Continuous averaging}

We construct the polynomial $K$ by using the method of continuous averaging \cite{TZ}
In the context of the theory of normal forms this method is presented in \cite{Tre1,Tre2}. The main idea is to look for $K(x,y,\delta)$ in the form $\xi\calH_*$, where $H_2 + \calH_*$
is the Hamiltonian $H$ in the variables $x(\delta),y(\delta)$, obtained as the $\delta$-shift along solutions of (\ref{eq2.1}):
\begin{equation}
\label{eq3.3}
  H_2(x(\delta),y(\delta)) + \calH_*(x(\delta),y(\delta),\delta) = H_2(x,y) + H_*(x,y)
\end{equation}
The linear operator $\xi$ will be specified below.

Differentiating (\ref{eq3.3}) in $\delta$, we obtain the initial value problem
\begin{equation}
\label{eq3.4}
  \partial_\delta\calH_* = - \{\xi\calH_*,H_2 + \calH_*\}, \qquad
  \calH_*\big|_{\delta=0} = H_*,
\end{equation}
where $\{\,,\}$ is the Poisson bracket.

We put $\hat\calH = \Theta\calH$ and determine the operator $\hat\xi$ by the identity
$$
  \xi G = \hat\xi\hat G \quad
  \mbox{for any function } \quad  G = \Theta^{-1}\hat G = O_3(x,y).
$$
Then (\ref{eq3.4}) takes the form
\begin{equation}
\label{eq3.5}
  \partial_\delta\hat\calH_* = - \{\hat\xi\hat\calH_*,\hat H_2 + \hat\calH_*\}, \qquad
  \hat\calH_*\big|_{\delta=0} = \hat H_*.
\end{equation}
For any $\hat G$, satisfying (\ref{eq3.1}), we put
\begin{equation}
\label{eq3.6}
    \hat\xi\hat G
  = \sum_{|\alpha|+|\beta|\le M,\,\langle\mu,\beta-\alpha\rangle\ne 0}
                \sigma_{\alpha,\beta} \hat G_{\alpha,\beta} z^\alpha w^\beta, \qquad
    \sigma_{\alpha,\beta}
  = \frac{|\langle\mu,\beta-\alpha\rangle|}{\langle\mu,\beta-\alpha\rangle}.
\end{equation}

\begin{prop}
\label{prop3.2}
For any real $\hat G$ the function $\hat\xi\hat G$ is also $\thet$-real.
\end{prop}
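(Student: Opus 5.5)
We use the coefficient characterization of $\thet$-reality from Lemma \ref{lem3.1}. Since $\hat\xi$ acts diagonally on monomials, multiplying $\hat G_{\alpha,\beta}$ by $\sigma_{\alpha,\beta}$ and truncating, it suffices to check that the index set and the multipliers are compatible with the involution $(\alpha,\beta)\mapsto(\alpha',\beta')$. Concretely, we must show:

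(i) the truncation conditions $|\alpha|+|\beta|\le M$ and $\langle\mu,\beta-\alpha\rangle\ne0$ hold for $(\alpha,\beta)$ if and only if they hold for $(\alpha',\beta')$;

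(ii) $\sigma_{\alpha,\beta}=\overline{\sigma_{\alpha',\beta'}}$.

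Given (i) and (ii), if $\hat G$ satisfies (\ref{eq3.2}), then
$$
  \sigma_{\alpha,\beta}\hat G_{\alpha,\beta}
  = i^{-(\cdots)}\,\overline{\sigma_{\alpha',\beta'}\hat G_{\alpha',\beta'}},
$$
with the same power of $i$ as in (\ref{eq3.2}). So $\hat\xi\hat G$ again satisfies (\ref{eq3.2}), and Lemma \ref{lem3.1} yields $\thet$-reality.

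The degree condition in (i) is trivial, since $'$ only permutes entries between $\alpha$ and $\beta$ and therefore preserves $|\alpha|+|\beta|$. The main step is the key identity
$$
  \langle\mu,\beta'-\alpha'\rangle=\overline{\langle\mu,\beta-\alpha\rangle},
$$
which gives both the nonvanishing part of (i) and (ii), because $\sigma(\bar c)=|c|/\bar c=\overline{|c|/c}$. We check the identity block by block using (\ref{eq1.2}).

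\emph{Complex-quadruple block.} The map $'$ swaps indices $2j-1$ and $2j$ in both $\alpha$ and $\beta$. The contribution $\mu_{2j-1}(\beta-\alpha)_{2j-1}+\mu_{2j}(\beta-\alpha)_{2j}$ becomes $\mu_{2j-1}(\beta-\alpha)_{2j}+\mu_{2j}(\beta-\alpha)_{2j-1}$. This is the conjugate, since $\mu_{2j}=\overline{\mu_{2j-1}}$ and the entries of $\beta-\alpha$ are integers.

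\emph{Elliptic block.} The map $'$ swaps $\alpha_k$ and $\beta_k$, so $(\beta-\alpha)_k$ changes sign. Since $\mu_k=-i\omega_k$ is purely imaginary, $-\mu_k=\overline{\mu_k}$, and the contribution is again conjugated.

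\emph{Hyperbolic block.} Nothing changes, and $\mu_l=\lambda_l$ is real. (Here I read the typo $\beta'_l=\beta'_l$ in the lemma as $\beta'_l=\beta_l$.)

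Summing the three blocks gives the identity. The only real obstacle is bookkeeping: making sure the conventions in $\thet$ match the ordering of eigenvalues, namely that swapping $2j-1\leftrightarrow 2j$ corresponds to conjugating $\mu$. This is consistent with (\ref{eq1.2}). The $\thet$-realness of $\hat G$ is used only through (\ref{eq3.2}).
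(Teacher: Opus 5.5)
Your proof is correct and follows the paper's argument. The paper likewise obtains $\langle\mu,\beta'-\alpha'\rangle=\overline{\langle\mu,\beta-\alpha\rangle}$ from (\ref{eq1.2}) and then concludes via Lemma \ref{lem3.1}. You additionally spell out the block-by-block check, the invariance of the truncation set, and the relation $\sigma_{\alpha,\beta}=\overline{\sigma_{\alpha',\beta'}}$, all of which the paper leaves implicit.
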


{\it Proof}. By (\ref{eq1.2}) we have:
$\langle \mu,\beta'-\alpha' \rangle = \overline{\langle \mu,\beta-\alpha \rangle}$. It remains to use Lemma
\ref{lem3.1}. \qed

\begin{cor}
\label{cor3.1}
If the initial condition $\hat H$ in (\ref{eq3.5}) is real then the solution $\hat\calH$ is also real.
\end{cor}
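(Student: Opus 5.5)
The plan is to show that the set of $\thet$-real functions is invariant under the flow of (\ref{eq3.5}). The cleanest route is to apply $\Conj_\thet$ to the whole equation and use uniqueness of solutions.

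Let $\hat\calH_*(\delta)$ be the solution of (\ref{eq3.5}) with $\thet$-real initial condition $\hat H_*$, and set $\hat\calG(\delta) = \Conj_\thet \hat\calH_*(\delta)$. The steps are as follows.

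First, $\Conj_\thet$ commutes with $\partial_\delta$, since $\delta$ is a real parameter and $\Conj_\thet$ only acts on coefficients and variables. Moreover $\Conj_\thet$ is real-linear.

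Second, by the Proposition preceding Proposition \ref{prop3.2}, $\Conj_\thet$ respects Poisson brackets: $\Conj_\thet\{\hat G_1,\hat G_2\} = \{\Conj_\thet \hat G_1,\Conj_\thet\hat G_2\}$. To see this, note that $\Conj$ preserves the bracket on real-variable series, because $dy\wedge dx$ has real coefficients, and conjugate by $\Theta$, which is symplectic since $dy\wedge dx = dw\wedge dz$.

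Third, Proposition \ref{prop3.2} extends to all series: $\Conj_\thet\hat\xi = \hat\xi\Conj_\thet$. This follows from $\langle\mu,\beta'-\alpha'\rangle = \overline{\langle\mu,\beta-\alpha\rangle}$ together with Lemma \ref{lem3.1}. Indeed, $\sigma_{\alpha',\beta'} = \overline{\sigma_{\alpha,\beta}}$, and the truncation conditions $|\alpha|+|\beta|\le M$ and $\langle\mu,\beta-\alpha\rangle\ne0$ are invariant under $(\alpha,\beta)\mapsto(\alpha',\beta')$.

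Fourth, $\hat H_2$ is $\thet$-real, being the image of the real $H_2$.

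Combining these four facts, $\hat\calG$ satisfies
$$
\partial_\delta\hat\calG = -\{\hat\xi\hat\calG,\hat H_2+\hat\calG\},\qquad \hat\calG|_{\delta=0}=\Conj_\thet\hat H_*=\hat H_*.
$$
This is the same initial value problem as the one solved by $\hat\calH_*$.

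It remains to invoke uniqueness, and this is the step needing care. Equation (\ref{eq3.5}) is a PDE in $(z,w)$, so I would work degree by degree in the Taylor expansion. Since $\hat\xi\hat\calH_*$ has degree at most $M$ and lowest degree $3$, the bracket $\{\hat\xi\hat\calH_*,\hat\calH_*\}$ contributes to degree $m$ only through terms of $\hat\calH_*$ of degree $<m$. The equation for the degree-$m$ homogeneous part is therefore a linear ODE with a finite-dimensional coefficient space: the part coming from $\{\hat\xi\hat\calH_*,\hat H_2\}$ acts diagonally, and the remaining terms are inhomogeneities determined by lower degrees. By induction on $m$, the solution is unique as a formal series. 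Hence $\hat\calG=\hat\calH_*$, i.e.\ $\hat\calH_*$ is $\thet$-real for all $\delta$, and so is $\hat\calH=\hat H_2+\hat\calH_*$.
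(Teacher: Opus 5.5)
Your proposal is correct and follows essentially the paper's route. The paper obtains Corollary \ref{cor3.1} from Proposition \ref{prop3.2} and the preceding proposition, i.e.\ from the fact that $\hat\xi$, sums and Poisson brackets preserve $\thet$-reality. You recast this as the equivariance $\Conj_\thet\hat\xi=\hat\xi\Conj_\thet$ and $\Conj_\thet\{\hat G_1,\hat G_2\}=\{\Conj_\thet\hat G_1,\Conj_\thet\hat G_2\}$, which is equivalent after splitting $\hat G=A+iB$ with $A,B$ $\thet$-real. You also spell out the degree-by-degree uniqueness of the triangular system. The paper leaves that step implicit, but it is exactly what is needed to pass from invariance of the right-hand side of (\ref{eq3.5}) to reality of the solution.
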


Now Lemma \ref{lem2.1} follows from Corollary \ref{cor3.1} and Proposition \ref{prop3.3} (below) on the existence of a solution of (\ref{eq3.5}). The function $K$ from Lemma \ref{lem2.1} is computed from the equation $K = \Theta^{-1}(\hat K)$, where $\hat K$ is determined by Assertion (2) of Proposition \ref{prop3.3}.

\begin{prop}
\label{prop3.3}
Suppose $\hat H_* = O_3(z,w)$ is a power series in $z$ and $w$. Then there exists a unique formal solution $\hat\calH$ of the system (\ref{eq3.5})--(\ref{eq3.6}) on the interval $\delta\in [0,+\infty)$ in the form of a power series in $z$ and $w$. Moreover,

(1) the polynomial part of degree $M$ in the Taylor expansion of $\hat\calH$ at the origin exponentially tends to the normal form as $\delta\to+\infty$,

(2) the function $\hat K = \xi\hat\calH$ is a polynomial of degree (at most) $M$ in $z$ and $w$ with coefficients exponentially tending to zero as $\delta\to +\infty$.
\end{prop}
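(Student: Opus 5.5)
The plan is to solve (\ref{eq3.5})--(\ref{eq3.6}) degree by degree, using the triangular structure of the equation with respect to the grading by total degree $|\alpha|+|\beta|$. Write $\hat\calH_* = \sum_{m\ge 3}\hat\calH^{(m)}$, where $\hat\calH^{(m)}$ is homogeneous of degree $m$. Since $\hat\xi$ preserves degree and kills all terms of degree $>M$, and since $\{\hat G_p,\hat G_q\}$ has degree $p+q-2$, projecting (\ref{eq3.5}) onto degree $m$ gives
$$
  \partial_\delta \hat\calH^{(m)} = -\{\hat\xi\hat\calH^{(m)},\hat H_2\} - \sum_{p+q-2=m,\ p,q\ge 3} \{\hat\xi\hat\calH^{(p)},\hat\calH^{(q)}\}.
$$
The right-hand sum involves only degrees $p,q<m$. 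Thus we get an infinite lower-triangular system of linear ODEs, each with finitely many unknown coefficients. Existence and uniqueness of a formal solution on $[0,+\infty)$ then follow by induction on $m$, because each step is a linear ODE with constant coefficients and a known continuous forcing.

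Next, we compute the linear part in the monomial basis. We have $\{z^\alpha w^\beta,\hat H_2\} = c\,\langle\mu,\beta-\alpha\rangle z^\alpha w^\beta$, with the sign $c=\pm1$ fixed by the convention $dw\wedge dz$. Choosing the orientation correctly, the homogeneous equation for the coefficient $h_{\alpha,\beta}$ with $m\le M$ and $\nu:=\langle\mu,\beta-\alpha\rangle\ne0$ reads
$$
  h'_{\alpha,\beta} = -\sigma_{\alpha,\beta}\,\nu\, h_{\alpha,\beta} + f_{\alpha,\beta} = -|\nu|\,h_{\alpha,\beta} + f_{\alpha,\beta}.
$$
This is the key point of continuous averaging: nonresonant coefficients decay like $e^{-|\nu|\delta}$. (If the sign convention comes out opposite, one reverses $\sigma$, and that is harmless.) Resonant coefficients ($\nu=0$, i.e.\ $\alpha=\beta$ by nonresonance) satisfy $h' = f$. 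For $m>M$ we have $\hat\xi\hat\calH^{(m)}=0$, so these coefficients are simply integrated and not controlled; this is fine, since only degree $\le M$ matters.

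The exponential decay is proved by induction on $m\le M$. The claim is that all nonresonant coefficients of degree $m$ are $O(e^{-\gamma_m\delta})$ for some $\gamma_m>0$, and all resonant coefficients converge exponentially to limits. Assume this for degrees $<m$. Every forcing term contains a factor $\hat\xi\hat\calH^{(p)}$ with $p<m$, and this factor decays exponentially, while $\hat\calH^{(q)}$ with $q<m$ stays bounded. Hence $f_{\alpha,\beta}=O(e^{-\gamma\delta})$. The variation of constants formula $h(\delta)=e^{-|\nu|\delta}h(0)+\int_0^\delta e^{-|\nu|(\delta-s)}f(s)\,ds$ then gives decay at rate $\min(|\nu|,\gamma)-\varepsilon$. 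For resonant terms, $h(\delta)=h(0)+\int_0^\delta f$ converges exponentially fast. Since there are finitely many multi-indices with $m\le M$, a uniform $\alpha>0$ exists.

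This gives assertion (2) immediately: $\hat K=\hat\xi\hat\calH_*$ is by definition a polynomial of degree $\le M$, and its coefficients are exactly the decaying ones. For assertion (1), the limit of the degree-$\le M$ part contains only the monomials $z^\alpha w^\alpha$, i.e.\ it is a polynomial in the products $z_jw_j$. Pulled back by $\Theta^{-1}$, these are the polynomials (\ref{eq1.3}), so the limit is the normal form. I expect the main obstacle to be bookkeeping: the sign convention of the Poisson bracket relative to $\sigma_{\alpha,\beta}$, which must produce damping rather than growth. The subtler point is that the forcing in degree $m$ also contains brackets $\{\hat\xi\hat\calH^{(p)},\hat\calH^{(q)}\}$ in which $\hat\calH^{(q)}$ has resonant, non-decaying parts. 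This is harmless only because each bracket carries at least one decaying $\hat\xi$ factor, so the induction has to track that structure explicitly.
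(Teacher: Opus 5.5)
Your proposal is correct and follows the paper's proof. Both solve the coefficient system (\ref{eq3.7}) by induction on $|\alpha|+|\beta|$ using its triangular structure: nonresonant coefficients of degree $\le M$ are damped at rate $|\langle\mu,\beta-\alpha\rangle|$, and resonant ones are driven by forcing that always contains a decaying $\hat\xi$-factor. The only difference is that the paper tracks the explicit quasi-polynomial form $\big(\hat H_{\alpha,\beta}+P_{\alpha,\beta}\big)e^{-|\langle\mu,\beta-\alpha\rangle|\delta}$, which gives the sharp decay rate, whereas your variation-of-constants bounds give only some positive rate, and that is all the statement needs.
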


\begin{rem}
For any $\delta\ge 0$ the function $\hat\calH$ is analytic in $z$ and $w$ in a small (but independent of $\delta$) neighborhood of the origin. This may be proven with the help of the majorant method (proofs of analogous statements may be found in \cite{Tre1,Tre2}).
\end{rem}

\subsection{Proof of Proposition \ref{prop3.3}}

If in (\ref{eq3.5}) $\hat\calH_* = \sum_{|\alpha|+|\beta|>2} \hat\calH_{\alpha,\beta} z^\alpha w^\beta$, we have by (\ref{eq3.6}):
$$
    - \{\hat\xi\hat\calH_*,\hat\calH_2\}
  = - \sum_{2<|\alpha|+|\beta|\le M}
           |\langle\mu,\beta-\alpha\rangle| \hat\calH_{\alpha,\beta} z^\alpha w^\beta.
$$
Hence (\ref{eq3.5}) takes the form
\begin{eqnarray}
\label{eq3.7}
      \partial_\delta\hat\calH_{\alpha,\beta}
  &=& - \rho_{\alpha,\beta} |\langle\mu,\beta-\alpha\rangle| \hat\calH_{\alpha,\beta}
      - \{\hat\xi\hat\calH_*,\hat\calH_*\}_{\alpha,\beta}, \qquad
      \hat\calH_{\alpha,\beta}(0) = \hat H_{\alpha,\beta}, \\
\nonumber
      \rho_{\alpha,\beta}
  &=& \left\{\begin{array}{cl} 1 & \mbox{ if } 2 < |\alpha| + |\beta| \le M, \\
                               0 & \mbox{ if } |\alpha| + |\beta| > M.
             \end{array}
      \right.
\end{eqnarray}
The term $\{\hat\xi\hat\calH_*,\hat\calH_*\}_{\alpha,\beta}$ denotes the coefficient at $z^\alpha w^\beta$ in the Taylor expansion of $\{\hat\xi\hat\calH_*,\hat\calH_*\}$.

For any $\alpha$ and $\beta$ the function $\{\hat\xi\hat\calH_*,\hat\calH_*\}_{\alpha,\beta}$ is a quadratic polynomial in the coefficients $\hat\calH_{\alpha',\beta'} = \hat\calH_{\alpha',\beta'}(\delta)$, where
\begin{equation}
\label{eq3.8}
  2 < |\alpha'| + |\beta'| < |\alpha| + |\beta|.
\end{equation}
More precisely,
$$
      \{\hat\xi\hat\calH_*,\hat\calH_*\}_{\alpha,\beta}
  =   \sum_{2 < |\alpha'| + |\beta'| \le M} \sum_{j=1}^n
      \rho_{\alpha',\beta'} \Big( \beta'_j (\alpha_j + 1) - \alpha'_j (\beta_j + 1) \Big)
      \hat\calH_{\alpha',\beta'} \hat\calH_{\alpha+e_j-\alpha',\beta+e_j-\beta'},
$$
where the vector $e_j=(e_{j1},\ldots,e_{jn})$ is such that $e_{jk} = \delta_{jk}$.

In this sense the system (\ref{eq3.7}) has a triangular form and may be solved by induction. Indeed, if
$|\alpha| + |\beta| = 3$, we have the equations
$$
      \partial_\delta\hat\calH_{\alpha,\beta}
   =  - |\langle\mu,\beta-\alpha\rangle| \hat H_{\alpha,\beta}.
$$
Hence, $\hat\calH_{\alpha,\beta}(\delta) = \hat H_{\alpha,\beta} e^{-|\langle\mu,\beta-\alpha\rangle|\delta}$.

Then by using induction arguments, we prove that for any $\alpha,\beta$ such that $2<|\alpha|+|\beta|\le M$ we have:
$$
    \hat\calH_{\alpha,\beta}(\delta)
  = \Big(\hat H_{\alpha,\beta} + P_{\alpha,\beta}(\hat H_*,\delta)\Big)
                                 e^{-|\langle\mu,\beta-\alpha\rangle|\delta},
$$
where $P_{\alpha,\beta}$ is a polynomial in $\hat H_{\alpha',\beta'}$ with $\alpha',\beta'$, satisfying (\ref{eq3.8}) and coefficients in the form of (finite) linear combinations of terms $\delta^s e^{-\nu\delta}$, $s\in\mZ_+$. Here $\nu\ge 0$ and moreover, if the term $\delta^s e^{-\nu\delta}$ with $\nu=0$ appears in a coefficient of $P_{\alpha,\beta}$ with $\langle\mu,\beta-\alpha\rangle = 0$ then in this term $s=0$.

\end{document}